\newtheorem{thm}{Theorem}[section]
\newtheorem{prop}[thm]{Proposition}
\newtheorem{lem}[thm]{Lemma}
\newtheorem{cor}[thm]{Corollary}
\newtheorem{ex}[thm]{Example}
\newtheorem{defn}[thm]{Definition}
\begin{document}

\begin{center}
{\Large \textbf{An Inversion Statistic on the Hyperoctahedral Group}} \vspace*{0.5cm}
\end{center}

\vspace*{0.3cm}
\begin{center}
Hasan Arslan$^{a,1}$, Alnour Altoum$^{a,2}$,  Hilal Karakus Arslan$^{b,3}$ \\
$^{a}${\small {\textit{Department of  Mathematics, Faculty of Science, Erciyes University, 38039, Kayseri, Turkey}}}\\
$^{b}${\small {\textit{Hikmet Kozan Secondary School, Republic of Turkey Ministry of National Education, 38070, Kayseri, Turkey}}}\\
{\small {\textit{ $^{1}$hasanarslan@erciyes.edu.tr}}}~~{\small {\textit{$^{2}$ alnouraltoum178@gmail.com}}}\\
{\small {\textit{$^{3}$karakus545@hotmail.com }}}\\[0pt]
\end{center}

\begin{abstract}
In this paper, we introduce an inversion statistic on the hyperoctahedral group $B_n$ by using an  decomposition of a positive root system of this reflection group. Then we prove some combinatorial properties for the inversion statistic. We establish an enumeration system on the group $B_n$ and give an efficient method to uniquely derive any group element known its enumeration order with the help of the inversion table. In addition, we prove that the \textit{flag-major index} is equi-distributed with this inversion statistic on $B_n$. 
\end{abstract}

\textbf{Keywords}: Permutation statistic, hyperoctahedral group, inversion number, flag-major index, Mahonian statistics.\\

\textbf{2020 Mathematics Subject Classification}: 05A05, 05A15, 05A19, 20F55.
\\

\section{Introduction}
The main aim of this paper is to introduce a new inversion statistic, which can be considered as a partition of the length function on the hyperoctahedral group $B_n$. This inversion number is compatible with the length function on $B_n$, just as the inversion number on $S_n$. If a statistic is equi-distributed with the length function (i.e., the number of inversions) on a Coxeter group, then it is called as \textit{Mahonian}. We give a bijective proof of the equi-distribution of this inversion statistic and the \textit{flag-major index} on $B_n$ by defining a map $\phi: B_n \rightarrow B_n$  such that $$inv(w)=fmaj(\phi(w))$$ 
for each $w \in B_n$. We illustrate this fact for the group $B_3$. Inversion table of any element in the group $B_n$ shares the same properties with a number in the $B_n$-type number system. Therefore, we also supply an enumeration system on the hyperoctahedral group and provide an approach to uniquely obtain any group element, which is known its enumeration order, by means of its inversion table. Having written any positive integer in the $B_n$-type number system, it can be uniquely converted to an element of the group $B_n$ using the concept of inversion table. This provides convenience in algebraic and combinatorical studies related to group structure, as well as in studies in the field of cryptology.

\section{Preliminaries and Notation}

In this section, we recall the definition of a real reflection group $B_n$, which is also called a hyperoctahedral group. Throughout this paper, for any $m, n \in \mathbb{Z}$ such that $m \leq n$, we assume that $[m,n]:=\{m, m+1, \cdots, n\}$. Let $\mathbb{R}^n$ be an Euclidean space. Let $\{e_1,\cdots, e_n\}$ be the set of standard basis vectors of $\mathbb{R}^n$. In fact, a finite real reflection group $B_n\subset GL_n(\mathbb{R})$ is generated by the reflections $s_1,\cdots,s_{n-1}$ of order $2$ associated with the roots $e_2-e_1,\cdots, e_n-e_{n-1}$, respectively, and an exceptional reflection $t_1$ of order $2$ with root $e_1$. The set $S=\{t_1,s_1,\cdots, s_{n-1}\}$ is the canonical set of generators for the group $B_n$. It is well-known that $B_n$ is a semi-direct product of $S_{n}$ and $\mathcal{T}_{n}$, where $S_n$ is the symmetric group generated by $\{ s_1, \cdots, s_{n-1}\}$ and $\mathcal{T}_{n}$ is a reflection subgroup of $B_n$ generated by $\{ t_1, \cdots, t_n \}$, where $t_{i+1}:=s_{i}t_{i}s_{i}$ for each $1 \leq i \leq n-1$. Any element $w \in B_n$ can be uniquely written in the form
\[
w = \bigl(\begin{smallmatrix}
	1 & 2 &  ~~\cdots  & n \\
	(-1)^{r_1}\beta_1 & ~~(-1)^{r_2}\beta_2 & ~~\cdots   & ~~ (-1)^{r_n}\beta_n
\end{smallmatrix}\bigr)=\beta \prod_{k=1}^n t_{k}^{r_k}\in B_n,
\]
where $r_i \in \{0,1\}$ and we write $\beta = \bigl(\begin{smallmatrix}
	1 & 2 &  \cdots  & n \\
	\beta_1 & \beta_2 & \cdots   &  \beta_n
\end{smallmatrix}\bigr) \in S_n$ to mean that $ \beta_i= \beta(i)$ for all $i=1, \cdots, n$. If we take into account the group $B_n$ as a real reflection group with the following root system
$$
\Psi=\{\pm e_l,~~ \pm e_j\pm e_i ~~:~l \in [1,n],~1 \leq i \neq j \leq n\}.
$$
then we have the sets of positive and negative roots regarding with $\Psi$, which are, respectively, defined as follows:
\begin{equation*}
	\Psi^{+}=\{e_l, ~~e_j-e_i, ~~ e_j+e_i ~~:~~l \in [1,n],~~1 \leq i<j  \leq n\},
\end{equation*}
and $\Psi^{-}=-\Psi^{+}$. As is well-known from \cite{Humphreys}, $\Psi$ can be written as $\Psi=\Psi^{+} \bigsqcup \Psi^{-}$ a decomposition of $\Psi^{+}$ and $\Psi^{-}$. The length function $\textit{L}$ on $B_n$ associated with the root system $\Psi$ is defined as 
\begin{equation}\label{len}
	\textit{L}~:~B_n \rightarrow \mathbb{N},~~~\textit{L}(w)=\mid  w(\Psi^{+}) \cap \Psi^{-} \mid .
\end{equation}
Furthermore, the length $L(w)$ of $w$ is equal to the length of the minimal expression for $w$ in terms of the elements of $S$. The longest element of $B_n$ is $w_0=t_1 \cdots  t_n$ and it is also central. It is well-known that the longest element $w_0$ of the group $B_n$ can be expressed as a signed permutation in the following form:
\[
w_0=  \left(
\begin{matrix}
	1 & 2 & 3 & \cdots & n-1 & n \\
	-1 & -2 & -3 & \cdots &  -(n-1)  & -n
\end{matrix}
\right).
\]
We note here that the length of any reduced expression in $B_n$ takes value at most $n^2$, which is the length of $w_0$.

Let $\sigma=\sigma_1 \cdots \sigma_n$ be a one-line presentation of $\sigma= \bigl(\begin{smallmatrix}
    1 & 2 &  \cdots  & n \\
    \sigma_1 & \sigma_2 & \cdots   &  \sigma_n
  \end{smallmatrix}\bigr) \in S_n$. As is well-known from \cite{Stanley2011}, the inversion number, the descent set and the major index of $\sigma$ are respectively defined in the following way:
\begin{align*}
inv (\sigma)=&| \{(i,j) \in [1,n] \times [1,n] ~:~ i<j~and~\pi_i > \pi _{j}\} | \\
Des (\sigma)=&\{i \in [1,n-1] ~:~ \sigma_i > \sigma _{i+1}\} \\
maj (\sigma)=& \sum_{i \in Des (\sigma)}i.
\end{align*}
MacMahon proved in \cite{MacMahon1915} that the the number of inversions inv is equi-distributed with the major index maj over the symmetric group $S_n$, that is, 
$$\sum_{\sigma \in S_n}q^{inv(\sigma)}=\sum_{\sigma \in S_n}q^{maj(\sigma)}.$$

Following \cite{flag2001} we let, $\sigma_0:=t_1$ and for all $i \in [1,n-1]$,~$\sigma_i:=s_is_{i-1}\cdots s_1 t_1 \in B_n$. Thus, the collection $\{\sigma_0, \sigma_1, \cdots, \sigma_{n-1}\}$ is a different set of generators for $B_n$ and any $w\in B_n$ has a unique expression 
\begin{equation}\label{flag0}
w=\sigma_{n-1}^{k_{n-1}}\cdots \sigma_{2}^{k_{2}}\sigma_{1}^{k_{1}}\sigma_{0}^{k_{0}}
\end{equation}
with $0\leq k_i \leq 2i+1$ for all $0\leq i \leq n-1$. \textit{Flag-major index} was defined for the group $B_n$ as follows (see \cite{flag2001}): Let $w\in B_n$. Then
\begin{equation}\label{flag1}
fmaj(w)=\sum_{i=0}^{n-1}k_i.
\end{equation}
It is well-known from \cite{flag2001} that the flag-major index  is Mahonian, that is, 
\begin{equation}\label{flag2}
\sum_{w \in B_n}q^{fmaj(w)}=\prod_{i=1}^n [2i]_q=\prod_{i=1}^n q^{L(w)}
\end{equation}
where $q$ is an indeterminate and $[2i]_q=\frac{1-q^{2i}}{1-q}$ for every $i=1,\cdots, n$.

\begin{thm} [Adin-Roichman \cite{flag2001}]
Let $w \in B_n$. Then
\begin{equation}
fmaj(w)=2maj(w)+neg(w).
\end{equation}
where $neg(w)=|\{i\in [1,n]:w(i)<0\}|$ and maj is computed by using the following order on $\mathbb{Z}$: 
\[
-1<-2<\cdots<-n<1<2<\cdots < n
\]
\end{thm}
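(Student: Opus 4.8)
My plan is to prove the identity by induction on $n$, using the parabolic factorization that already underlies the definition of $fmaj$. Embed $B_{n-1}=\langle t_1,s_1,\dots,s_{n-2}\rangle$ in $B_n$ as the signed permutations fixing the last coordinate, so that $\{\sigma_0,\dots,\sigma_{n-2}\}$ generates $B_{n-1}$. Writing $w$ as in \eqref{flag0} and setting $w'=\sigma_{n-2}^{k_{n-2}}\cdots\sigma_0^{k_0}\in B_{n-1}$, we get $w=\sigma_{n-1}^{k_{n-1}}w'$ with $0\le k_{n-1}\le 2n-1$, and the definition \eqref{flag1} gives immediately $fmaj(w)=k_{n-1}+fmaj(w')$. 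Hence, invoking the inductive hypothesis $fmaj(w')=2maj(w')+neg(w')$, the whole theorem reduces to the single claim
\begin{equation}\label{keyclaim}
2maj(w)+neg(w)=k_{n-1}+2maj(w')+neg(w').
\end{equation}
Here one first checks the harmless bookkeeping point that $maj(w')$ and $neg(w')$ may be computed from the $B_n$-window $[w'(1),\dots,w'(n-1),n]$: since the trailing value $n$ is maximal in the given order, it is an ascent in position $n-1$ and contributes nothing to $neg$, so these equal the genuine $B_{n-1}$-statistics.

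The engine of the proof is a reformulation in terms of ranks. Let $\rho(a)\in\{1,\dots,2n\}$ record the position of $a$ in the order $-1<-2<\cdots<-n<1<\cdots<n$, so $\rho(-j)=j$ and $\rho(j)=n+j$; then descents, $maj$, and $neg(w)=|\{i:\rho(w(i))\le n\}|$ are all determined by the rank word $(\rho(w(1)),\dots,\rho(w(n)))$. The key structural observation is that $\sigma_{n-1}$ is the $2n$-cycle $1\to -n\to -(n-1)\to\cdots\to -1\to n\to\cdots\to 2\to 1$, which in rank coordinates sends rank $r$ to rank $r-1\pmod{2n}$. Consequently left multiplication by $\sigma_{n-1}$ shifts the entire rank word entrywise by $-1\pmod{2n}$. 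In particular $\rho(w(n))=2n-k_{n-1}$, which recovers the explicit value $k_{n-1}=n-w(n)$ when $w(n)>0$ and $k_{n-1}=2n-|w(n)|$ when $w(n)<0$, and also confirms uniqueness of the exponent.

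Writing $w^{(k)}=\sigma_{n-1}^{k}w'$, the plan is then to show that $2maj+neg$ increases by exactly $1$ at each elementary shift $w^{(k-1)}\mapsto w^{(k)}$ for $1\le k\le 2n-1$; summing these unit increments yields \eqref{keyclaim} with $k_{n-1}=k$. The per-step analysis is a short case check on the unique position $p$ carrying rank $1$ (the value $-1$), which is the only entry that wraps from rank $1$ to rank $2n$ under the shift while all other relative orders are preserved. When rank $1$ is absent the shift merely pushes the entry of rank $n+1$ (value $+1$) across the threshold, giving $neg$ the increment $+1$ and leaving $maj$ fixed. When rank $1$ sits at a position $p\le n-1$, that position turns from a valley into a peak, so $maj$ gains $+1$ (remove the descent at $p-1$, create one at $p$; the endpoint $p=1$ is checked separately) while $neg$ loses $1$, for a net change $2(+1)+(-1)=+1$. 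The sole exceptional case is $p=n$: then the descent at $n-1$ is destroyed and none is created, so $maj$ drops by $n-1$ and $neg$ drops by $1$, giving net $-(2n-1)$. Crucially, position $n$ carries rank $1$ only when $2n-(k-1)=1$, i.e. at the wrap-around step $k=2n$ corresponding to $\sigma_{n-1}^{2n}=e$, which is excluded from the admissible range $0\le k_{n-1}\le 2n-1$; starting from $w'$, whose last entry $n$ has maximal rank, all intermediate steps avoid it and yield $+1$.

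The main obstacle is precisely this per-step lemma: controlling the major index under a cyclic shift of \emph{values} (not positions) requires isolating the single wrapping entry and managing the boundary positions $p=1$ and $p=n$ together with the degenerate step where the value $-1$ is absent altogether. Once it is verified that the only descent-destroying step is the full-period return, excluded by the range constraint, the induction closes; the base case $n=1$ is the direct check $fmaj(e)=0=2maj(e)+neg(e)$ and $fmaj(t_1)=1=2maj(t_1)+neg(t_1)$.
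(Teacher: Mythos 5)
The paper does not prove this statement: it is quoted as a known theorem of Adin--Roichman from \cite{flag2001}, so there is no in-paper argument to compare against. Your proof is, however, correct and self-contained, and it is worth recording why it works. The two pillars are (i) the observation that $\sigma_{n-1}$ acts on the values $\{\pm 1,\dots,\pm n\}$ as the $2n$-cycle that decrements the rank $r\mapsto r-1 \pmod{2n}$ in the order $-1<\cdots<-n<1<\cdots<n$, so that left multiplication by $\sigma_{n-1}$ shifts the rank word of the window entrywise, and (ii) the per-step lemma that $2\,maj+neg$ increases by exactly $1$ under each such shift except at the full-period wrap, which is excluded because $w'(n)=n$ has maximal rank and $0\le k_{n-1}\le 2n-1$. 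I checked the case analysis: when $-1$ is absent from the window, $+1$ is present (exactly one of $\pm j$ occurs for each $j$), so $neg$ gains $1$ and no descent changes; when $-1$ sits at position $p\le n-1$, the descent at $p-1$ is traded for one at $p$ (or one is simply created when $p=1$), giving $\Delta maj=+1$ and $\Delta neg=-1$; and the destructive case $p=n$ occurs only at step $k=2n$. The bookkeeping reduction of $maj$ and $neg$ of $w'$ from the $B_n$-window $[w'(1),\dots,w'(n-1),n]$ to the genuine $B_{n-1}$-statistics is also handled correctly, since the orders on $\{\pm1,\dots,\pm(n-1)\}$ induced from $B_{n-1}$ and $B_n$ agree and the trailing $n$ creates no descent and no negative entry. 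The only point I would ask you to make fully explicit is the parenthetical fact used in both cases of the per-step lemma, namely that the window contains exactly one of $\{+1,-1\}$, which is what forces $neg$ to change by exactly $\pm1$ rather than $0$ or $\pm2$. Your argument is genuinely different in flavor from the original one in \cite{flag2001} (which manipulates the exponents $k_i$ of the canonical factorization directly in terms of descent data); what your route buys is that only the single generator $\sigma_{n-1}$ ever needs to be analyzed, with everything else delegated to induction.
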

\begin{ex}
If $w=[2,-5,-3,-1,4]\in B_5$, then we have $maj(w)=1+2+3=6$, $neg(w)=3$ and so $fmaj(w)=15$, where the order we use when calculating the maj index is $-1<-2<-3<-4<-5<1<2<3<4<5$.
\end{ex}

In \cite{Raharinirina2020-0}, Raharinirina constructed a $B_n$-type number system and showed that any positive integer can be uniquely expressed in the $B_n$-type number system. 

\begin{defn}[\cite{Raharinirina2020-0}]
	The $B_n$-type number system is a radix base system that every positive integer $x$ can be uniquely expressed in the following form:
	\begin{equation}\label{def}
		x=\sum_{i=0}^{n-1} d_iB_i
	\end{equation}
	where $n \in \mathbb{Z}^+$, $d_i \in \{0, 1, 2, \cdots, 2i+1\}$ and $B_i=2^{i}i!$. 
\end{defn}
Then, for any positive integer $x$ in the $B_n$-type number system, we will use the notation 
\[
x=(d_{n-1}:d_{n-2}: \cdots : d_1: d_0).
\]

To write any positive integer $x$ in a $B_n$-type number system due to \cite{Raharinirina2020-0}, one proceeds with the following steps. In the first step, $x$ is divided by $2$ and the reminder $r_0$ is set to $d_0$ in the division process
\begin{equation*}
	x=2q_0+r_0.
\end{equation*}
Then divide $q_0$ by $4$ and the reminder $r_1$ sets to 
$d_1$ in the following division process
\begin{equation*}
	q_0=4q_1+r_1.
\end{equation*}
If we continue these operations by dividing $q_{i-1}$ by $2(i+1)$ and take $r_i=d_i$ in the expression
\begin{equation*}
	q_{i-1}=2(i+1)q_i+r_i
\end{equation*}
until the quotient $q_{n-1}$ is zero for some integer $n$. Thus, at the last step, we get 
\begin{equation*}
    q_{n-2}=2nq_{n-1}+r_{n-1}
\end{equation*}
and set $r_{n-1}$ as $d_{n-1}$. Eventually, the number $x$ is written in the form 
\begin{equation}\label{intrep}
	x=(d_{n-1}:d_{n-2}: \cdots : d_1: d_0)
\end{equation}
in the $B_n$-type base system.

Therefore, we have developed two algorithms that will facilitate the process of obtaining the representation in $B_n$-type number system of  any positive integer and vice versa. 
Any positive integer can be easily written in a $B_n$-type number system in a unique way when using the Python algorithm provided below:\\

\textbf{Algorithm 1:} \hspace*{\fill} \\
x=\textrm{int(input}('\textrm{Enter}~~\textrm{any}~~\textrm{positive}~~ \textrm{integer:}'))\\
m=2\\
\textrm{for}~~\textrm{i}~~\textrm{in}~~\textrm{range}(1,x):\\
$d=x \% m$\\
if $x > 0:$\\
$x=x//h$\\
m=m+2\\
else:\\
break\\
print(d, end=':') \\

The following example shows how this algorithm works:  

\begin{ex}
	Pick the integer $x=163$. The expression of $x$ in $B_{4}$-type number system is $x=(3:2:1:1)$.
\end{ex}

Alternatively, a positive integer can also be created from any number in the $B_n$-type number system with the help of the following Python algorithm:\\

\textbf{Algorithm 2:} \hspace*{\fill} \\
n=\textrm{int(input}('\textrm{Enter}~~\textrm{the}~~\textrm{index}~~ \textrm{of}~~\textrm{$B_n$}~~ \textrm{base}'))\\
f=1\\
x=0\\
\textrm{for}~~\textrm{i}~~\textrm{in}~~\textrm{range}(0,n):\\
d=\textrm{int(input}('\textrm{Enter}~~\textrm{a}~~\textrm{number}~~ \textrm{in}~~\textrm{$B_n$-type}~~ \textrm{number}~~\textrm{system}'))\\
if $i==0$ or $i==1:$\\
f=1\\
else:\\
$f=f*i$\\
$t=2**(i)*f$\\
z=d*t\\
x +=z\\
print('The decimal number is: ',x) 

\begin{ex}   
	Take $x=(10:12:3:9:10:5:1:1:0:1)$ as a number in $B_{10}$-type number system. It is actually an integer representation in the $B_{10}-type$ number system of the positive integer $x=1984199097$.
\end{ex}

\section{Revisited inversion statistic on group $B_n$}

In this section, we will give a various approach to the concept of the inversion statistic defined on the group $B_n$ than that of \cite{Raharinirina2020-1}. We will show that any element of the group $B_n$ can be uniquely represented in the $B_n$-type number system by means of the inversion statistic method. 

Now we define the set 
$$\Psi_i=\{e_{n+1-i},~~e_{n+1-i}-e_j,~~e_{n+1-i}+e_j~:~j<n+1-i \leq n\},$$ and $inv_i(w)=\mid w(\Psi_i)\cap \Psi^{-}\mid$
for each $i=1,\cdots, n$. The sequence $I(w) = (inv_1(w) : \cdots : inv_n(w))$ is called \textit{inversion table} of element $w \in B_n$. It should be noted here that an inversion table can be think of as a number in the $B_n$-type number system. If $x$ be the corresponding positive integer to the inversion table $I(w)$, then $x+1$ is said to be the \textit{rank} of $w$. This enables us to enumerate all the elements of the group $B_n$. The inversion table of $w$ can be essentially created by applying the rule given in the following theorem:

\begin{thm}\label{3}
For $w=\beta \prod_{k=1}^n t_{k}^{r_k} \in B_n$, we have
\begin{equation}\label{33}
inv_i(w)=r_{n+1-i}+2.\mid \{(j,n+1-i) : j<n+1-i, ~~\beta_j<\beta_{n+1-i}, r_{n+1-i}\neq 0\} \mid+inv_i(\beta)
\end{equation}
for all $i=1,\cdots,n$, where $inv_i(\beta)=\mid\{(j,n+1-i) : j<n+1-i,~~ \beta_j>\beta_{n+1-i} \} \mid$ in $S_n$ and $r_{n+1-i} \in \{0,~1\}$.  More precisely, $inv_i(w)=1+2.\mid \{(i,j) : i<j, ~~\beta_j<\beta_{n+1-i}\} \mid+inv_i(\beta)$ when $r_{n+1-i}=1$ and   $inv_i(w)=inv_i(\beta)$ when $r_{n+1-i}=0$. 
\end{thm}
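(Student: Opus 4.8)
The plan is to evaluate the definition $inv_i(w)=|w(\Psi_i)\cap\Psi^-|$ directly, root by root. Writing $m:=n+1-i$, the set $\Psi_i$ consists of exactly the $2m-1$ positive roots whose largest index equals $m$, namely $e_m$ together with $e_m-e_j$ and $e_m+e_j$ for $j<m$ (so that $\Psi^+=\bigsqcup_i\Psi_i$ and $L(w)=\sum_i inv_i(w)$). Since $w=\beta\prod_k t_k^{r_k}$ acts as the signed permutation $w(e_l)=(-1)^{r_l}e_{\beta_l}$, I would first record the three images
\[
w(e_m)=(-1)^{r_m}e_{\beta_m},\qquad
w(e_m\pm e_j)=(-1)^{r_m}e_{\beta_m}\pm(-1)^{r_j}e_{\beta_j},
\]
and then decide which of them lie in $\Psi^-$.

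For the membership test I would use the explicit description $\Psi^-=\{-e_l,\ e_i-e_j,\ -e_j-e_i : i<j\}$. Thus a vector $\pm e_a$ lies in $\Psi^-$ precisely when its sign is negative, and a vector $ce_a+de_b$ with $a\neq b$ and $c,d\in\{\pm1\}$ lies in $\Psi^-$ precisely when either $c=d=-1$, or exactly one of $c,d$ equals $-1$ and the coordinate carrying the $+1$ has the smaller index (equivalently, the coordinate carrying the $-1$ has the larger index). Applying this criterion to the images above turns the computation of $inv_i(w)$ into a finite sign count.

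I would then organize the count according to the value of $r_m=r_{n+1-i}$, and within each case according to $r_j$ for each $j<m$. The root $e_m$ contributes $1$ exactly when $r_m=1$, giving the leading term $r_{n+1-i}$. The key observation, which I expect to be the crux, is that for each fixed $j<m$ the pair $\{e_m-e_j,\ e_m+e_j\}$ contributes the \emph{same} amount whether $r_j=0$ or $r_j=1$: when $r_m=0$ the pair contributes $1$ iff $\beta_j>\beta_m$ and $0$ otherwise, while when $r_m=1$ it contributes $1+1$ iff $\beta_j<\beta_m$ and $1$ otherwise. Summing over $j<m$ then yields exactly $inv_i(\beta)$ when $r_m=0$, matching the claim; and when $r_m=1$ it yields $1+(m-1)+P$, where $P=|\{j<m:\beta_j<\beta_m\}|$ is the count appearing in the theorem. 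Finally, using $P+inv_i(\beta)=m-1$ I would rewrite $(m-1)+P=2P+inv_i(\beta)$, obtaining $inv_i(w)=1+2P+inv_i(\beta)$, as stated, and both cases are unified by the single formula \eqref{33}.

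The main obstacle is the sign bookkeeping in the membership test together with verifying the $r_j$-independence cleanly; once that symmetry is established the four subcases collapse to the two displayed expressions with essentially no further computation.
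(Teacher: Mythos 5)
Your proposal is correct and takes essentially the same route as the paper: evaluate $inv_i(w)=\mid w(\Psi_i)\cap \Psi^{-}\mid$ root by root, treating $e_{n+1-i}$ separately and the pairs $e_{n+1-i}\pm e_j$ together, then splitting on $r_{n+1-i}$. If anything, your observation that each pair's total contribution is independent of $r_j$ organizes the sign bookkeeping more cleanly than the paper's parametrization by $k$ and $r_j$.
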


\begin{proof}
Let $e_{n+1-i} \in \Psi_i$. Then $w(e_{n+1-i})=(-1)^{r_{n+1-i}}e_{\beta_{n+1-i}} \in\Phi^{-}$ if and only if $r_{n+1-i}=1$. Now let $e_{n+1-i}\pm e_j \in \Psi_i$. We denote $e_{n+1-i}\pm e_j$ by $e_{n+1-i}-(-1)^k e_j$, where $k$ is $0$ or $1$. Then we have $w(e_{n+1-i}\pm e_j)=(-1)^{r_{n+1-i}}e_{\beta_{n+1-i}}- (-1)^{k+r_j}e_{\beta_j}$, which lies in  $\Psi^{-}$ if and only if either $r_{n+1-i}=1$ and $\beta_j< \beta_{n+1-i}$ (where $k$ takes exactly two values) or $k+r_j=2$ and $\beta_j> \beta_{n+1-i}$. This completes the proof. Clearly, $inv_i(w)=inv_i(\beta)$ if $r_{n+1-i}=0$ . This completes the proof.
\end{proof}

The formula given in (\ref{33}) is nothing else but a special case of the formula expressed in Theorem 4.5 in \cite{hasan2022} (the case $m=2$). For all $i=1,\cdots,n$, we get $inv_i(w) \in [0,2(n-i)+1]$.

Let $inv(w)$ denote the sum of $i$-inversions of the permutation $w \in B_n$. It is clear that $L(w)=inv(w)$. Subsequently, one can practically determine the length of $w$ with the help of its inversion table. 

 \begin{ex}
Let $w= \bigl(\begin{smallmatrix}
    1 & ~~2 & ~~ 3&~~ 4&~~5&~~6 &~~7&~~8\\
   2&~~ 4 &~~ 1 &~~-3&~~ 6&~~ 7&~~-5&~~8
\end{smallmatrix}\bigr)\in B_8.$ Taking into account the equation (\ref{33}) we obtain the inversion table of $w$ as $I(w)=(0:11:0:0:6:2:0:0)$, and so we conclude that the length of $w$ is $L(w)=19$ and that the rank of $w$ is $507185$ using Algorithm 2. On the other hand, the reduced expression of $w$ is $s_1s_2s_3s_1t_1s_1s_2s_3s_4s_3s_2s_1t_1s_1s_2s_3s_4s_5s_6$ according to the canonical generating set $S=\{t_1,s_1,\cdots,s_7\}$, and so $L(w)=19$ from another viewpoint.
 \end{ex}
Taking into account the equation (\ref{33}), we can give the following result for the longest element $w_0$ in $B_n$.

\begin{cor}\label{longest}
	Let $w_0$ be the longest element of the group $B_n$. Then the inversion table of $w_0$ is $$I(w_0)=(d_{n-1}:d_{n}: \cdots : d_1: d_0)=(2n-1:2n-3: \cdots:5:3:1).$$
	Therefore, it is clear that the order of group $B_n$ is 
	$$\mid  B_n \mid=\prod_{i=0}^{n-1}(d_i+1)=2^{n}n!.$$
\end{cor}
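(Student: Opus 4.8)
The plan is to read off the signed‑permutation data of $w_0$ and substitute it directly into the formula of Theorem \ref{3}. Recall that the longest element is $w_0=[-1,-2,\ldots,-n]$; in the decomposition $w=\beta\prod_k t_k^{r_k}$ this means the underlying permutation is the identity, so $\beta_j=j$ for all $j$, while every sign is flipped, so $r_k=1$ for all $k\in[1,n]$. Since all ingredients of (\ref{33}) become completely explicit, the evaluation of each $inv_i(w_0)$ reduces to counting an interval of indices, and no reduced‑word analysis is needed.

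First I would evaluate the three summands of (\ref{33}) for $w_0$. Because $\beta$ is the identity there is no pair with $j<n+1-i$ and $\beta_j>\beta_{n+1-i}$, so $inv_i(\beta)=0$. Because $r_{n+1-i}=1\neq 0$, the constant term contributes $1$, and the middle term counts exactly those $j<n+1-i$ with $\beta_j<\beta_{n+1-i}$; as $\beta_j=j$, this holds for every $j\in\{1,\ldots,n-i\}$, giving $n-i$ such pairs. Hence $inv_i(w_0)=1+2(n-i)+0=2(n-i)+1$ for each $i\in[1,n]$, which is the upper end of the range $[0,2(n-i)+1]$ recorded after Theorem \ref{3} and confirms that $w_0$ has maximal length.

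Next I would pass to the digit notation. Under the identification of the inversion table with a $B_n$-type numeral, the coordinate $inv_i(w)$ plays the role of the digit $d_{n-i}$, whose admissible range $\{0,\ldots,2(n-i)+1\}$ matches that of $inv_i$. Setting $j=n-i$ gives $d_j=2j+1$ for $j=0,\ldots,n-1$, that is $I(w_0)=(2n-1:2n-3:\cdots:5:3:1)$, as claimed. For the order assertion, the relevant point is that $w\mapsto I(w)$ is a bijection between $B_n$ and the length-$n$ $B_n$-type numerals: each digit $d_i$ ranges independently over $\{0,1,\ldots,2i+1\}$ and attains its maximum $2i+1$ precisely at $w_0$. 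Counting these numerals then yields
$$|B_n|=\prod_{i=0}^{n-1}(d_i+1)=\prod_{i=0}^{n-1}(2i+2)=2^{n}\prod_{i=0}^{n-1}(i+1)=2^{n}\,n!.$$

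The evaluation of (\ref{33}) is entirely routine; the only place deserving care is the order computation, where the clean product formula rests on the inversion table furnishing a bijection onto all length-$n$ $B_n$-type numerals, i.e.\ that every admissible string $(d_{n-1}:\cdots:d_0)$ is realized by one and only one $w\in B_n$. I expect this surjectivity together with injectivity to be the main point to secure (it is the substance of the enumeration discussion accompanying the definition of $I(w)$), after which the factorization into $2^{n}$ and $n!$ is immediate.
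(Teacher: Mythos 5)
Your proposal is correct and follows exactly the route the paper indicates: the corollary is stated as an immediate consequence of equation (\ref{33}), and your substitution $\beta=\mathrm{id}$, $r_k=1$ yielding $inv_i(w_0)=1+2(n-i)$ is precisely that computation, with the digit identification $d_{n-i}=inv_i(w_0)$ and the count $\prod_{i=0}^{n-1}(2i+2)=2^n n!$ matching the paper's enumeration via the bijection $I$. You also correctly flag that the order formula rests on the bijectivity of $w\mapsto I(w)$, which the paper records (without proof) as a separate proposition.
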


It is well-known from \cite{Humphreys} that the exponents of the group $B_n$ are $1,~3,\cdots, 2n-1$, respectively. Note that all components in the inversion table of $w_0$ in Corollary \ref{longest} exactly coincide with the exponents of the group $B_n$.

Now, on the contrary, we set up a fruitful technique of how to create the signed permutation $w\in B_n$ from a given rank value in the following way:
First of all, consider a rank  $k$. Subsequently, turn $k-1$ into a number in $B_n$-type number system. Let us show $k-1$ by $(d_{n-1}:\cdots :d_1:d_0)\in B_n$. Essentially, we want to obtain a signed permutation $w$ such that
\[
inv_i(w)=d_{n-i}~\textrm{for}~\textrm{all}~i=1,\cdots,n.
\]

We will build a signed permutation $w=\bigl(\begin{smallmatrix}
    1 & 2 &  \cdots &n-1 & n \\
    w_1 & w_2 & \cdots &w_{n-1}  &  w_n
  \end{smallmatrix}\bigr)$ associated with $(d_{n-1}:\cdots :d_1:d_0)$ by proceeding the following steps:
\begin{itemize}
     \item Having listed all possible values that the desired signed permutation can take in the following order
\begin{equation}\label{converse1}
n > \cdots >1>-1>\cdots>-n
\end{equation}
enumerate them up from $0$ to $2n-1$ by starting with the leftmost value in (\ref{converse1}). Then find the value corresponding to the number $d_{n-1}$ from (\ref{converse1}) and set it as $w_n$.\\
 \item Say $w_n:=(-1)^{r_i}i$, where $r_i \in \{0,1\}$. Extract the terms $i,- i$  from (\ref{converse1}). After that, reorder the remaining values as 
\begin{equation}\label{converse2}
\begin{split}
n>\cdots>i+1>i-1>\cdots>1> &-1>\cdots>- (i-1)> -(i+1)> \cdots>-n
\end{split}
\end{equation}
and renumber them from $0$ to $2(n-1)-1$ by starting with the leftmost term. Then determine the value corresponding to the number $d_{n-2}$ from (\ref{converse2}) and assign it to $w_{n-1}$.
     \item Carry out the same procedure for  (\ref{converse2}) and determine in this manner $w_{n-2}$.
\item Proceed these iterations until you determine all $w_i$ values for each $1 \leq i \leq n$.
 \end{itemize}

Let us consider the following example to make this method clear.

\begin{ex} 
We will find the $1464993^{th}$ group element of $B_8$. If we apply Algorithm 1 to the positive integer $1464992$, then we get the inversion table of this element which we are looking for as $I(w)=(2:3:9:5:0:4:0:0)$.

\scalebox{0.7}{
	\begin{tabular}{|m{0.9cm}|m{0.9cm}|m{.4cm}m{.4cm}|m{.4cm}m{.4cm}|m{.4cm}m{.4cm}|
 m{.4cm}m{.4cm}|m{.4cm}m{.4cm}|m{.4cm}m{.4cm}|m{.4cm}m{.4cm}|m{.4cm}m{.4cm}|}
		\hline
		\multirow{2}{*}{Step 1}&P.P.V.&8& 7 &\color{blue}6& 5& 4& 3&2 &1& -1 &-2&-3 &-4& -5& -6&-7 &-8\\
		\cline{2-18}
		&P.I.V.&0& 1 &\color{blue}2 &3 &4& 5& 6 &7 &8& 9&10 &11&12& 13&14& 15\\
		\hline
	\end{tabular}}
 
 \scalebox{0.7}{
	\begin{tabular}{|m{0.9cm}|m{0.9cm}|m{.4cm}m{.4cm}|m{.4cm}m{.4cm}|m{.4cm}m{.4cm}|
 m{.4cm}m{.4cm}|m{.4cm}m{.4cm}|m{.4cm}m{.4cm}|m{.4cm}m{.4cm}|}
		
		\multirow{2}{*}{Step 2}&P.P.V.&8& 7 & 5& \color{blue}4& 3&2 &1& -1 &-2&-3 &-4& -5& -7 &-8\\
		\cline{2-16}
		&P.I.V.&0& 1 &2 &\color{blue}3 &4& 5& 6 &7 &8& 9&10 &11&12& 13\\
		\hline
	\end{tabular}}

 \scalebox{0.7}{
	\begin{tabular}{|m{0.9cm}|m{0.9cm}|m{.4cm}m{.4cm}|m{.4cm}m{.4cm}|m{.4cm}m{.4cm}|
 m{.4cm}m{.4cm}|m{.4cm}m{.4cm}|m{.4cm}m{.4cm}|}
		
		\multirow{2}{*}{Step 3}&P.P.V.&8& 7 & 5&  3&2 &1& -1 &-2&-3 &\color{blue} -5& -7 &-8\\
		\cline{2-14}
		&P.I.V.&0& 1 &2 &3 &4& 5& 6 &7 &8& \color{blue}9&10 &11\\
		\hline
	\end{tabular}}

\scalebox{0.7}{
	\begin{tabular}{|m{0.9cm}|m{0.9cm}|m{.4cm}m{.4cm}|m{.4cm}m{.4cm}|m{.4cm}m{.4cm}|
 m{.4cm}m{.4cm}|m{.4cm}m{.4cm}|}
		
		\multirow{2}{*}{Step 4}&P.P.V.&8& 7 &  3&2 &1&\color{blue} -1 &-2&-3 &-7 &-8\\
		\cline{2-12}
		&P.I.V.&0& 1 &2 &3 &4& \color{blue}5& 6 &7 &8&9\\
		\hline
	\end{tabular}}

 \scalebox{0.7}{
	\begin{tabular}{|m{0.9cm}|m{0.9cm}|m{.4cm}m{.4cm}|m{.4cm}m{.4cm}|m{.4cm}m{.4cm}|
 m{.4cm}m{.4cm}|}
		
		\multirow{2}{*}{Step 5}&P.P.V.&\color{blue}8& 7 &  3&2 &-2&-3 &-7 &-8\\
		\cline{2-10}
		&P.I.V.&\color{blue}0& 1 &2 &3 &4& 5& 6 &7 \\
		\hline
	\end{tabular}}

  \scalebox{0.7}{
	\begin{tabular}{|m{0.9cm}|m{0.9cm}|m{.4cm}m{.4cm}|m{.4cm}m{.4cm}|m{.4cm}m{.4cm}|}
		
		\multirow{2}{*}{Step 6}&P.P.V.& 7 &  3&2 &-2&\color{blue}-3 &-7\\
		\cline{2-8}
		&P.I.V.&0& 1 &2 &3 &\color{blue}4& 5 \\
		\hline
	\end{tabular}}

   \scalebox{0.7}{
	\begin{tabular}{|m{0.9cm}|m{0.9cm}|m{.4cm}m{.4cm}|m{.4cm}m{.4cm}|}
		
		\multirow{2}{*}{Step 7}&P.P.V.&\color{blue} 7 & 2 &-2& -7\\
		\cline{2-6}
		&P.I.V.&\color{blue}0& 1 &2 &3\\
		\hline
	\end{tabular}}

  \scalebox{0.7}{
	\begin{tabular}{|m{0.9cm}|m{0.9cm}|m{.4cm}m{.4cm}|}
		
		\multirow{2}{*}{Step 8}&P.P.V.& \color{blue}2 &-2\\
		\cline{2-4}
		&P.I.V.&\color{blue}0& 1 \\
		\hline
	\end{tabular}}\\

In the above table, the abbreviations of possible values that the desired signed permutation can take and possible inversion values are represented by P.P.V. and P.I.V., respectively. Considering the above table, then $w\in B_8$ is built up as 
\begin{equation*}
 w = \bigl(\begin{smallmatrix}
    1 & 2 & ~3& 4& ~5 & ~6 & 7 & 8\\
    2 & 7 & -3& 8& -1 & -5 & 4 & 6
  \end{smallmatrix}\bigr).
\end{equation*}
\end{ex}

Based on the above facts, we may state the following result without proof.

\begin{prop}
Let
\begin{align*}
\mathcal{T}_{2,n}&=\{(a_1 : \cdots : a_n) ~:~ 0 \leq a_i \leq 2(n-i+1)-1,~i=1,\cdots, n\}\\
&=[0,2n-1]\times [0,2n-3]\times \cdots \times [0,3]\times [0,1]. 
\end{align*}
The map $I: B_n \rightarrow \mathcal{T}_{2,n}$ that assigns each permutation to its inversion table is a bijection. 
\end{prop}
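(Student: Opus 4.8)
The plan is to establish that $I : B_n \rightarrow \mathcal{T}_{2,n}$ is a bijection by verifying that it is well-defined into $\mathcal{T}_{2,n}$ and then invoking a counting argument, since both sides are finite sets of the same cardinality. First I would confirm well-definedness: by Theorem~\ref{3} and the remark immediately following it, each coordinate satisfies $inv_i(w) \in [0, 2(n-i)+1] = [0, 2(n-i+1)-1]$, which is precisely the range for $a_i$ in the definition of $\mathcal{T}_{2,n}$. Hence $I(w) \in \mathcal{T}_{2,n}$ for every $w \in B_n$, so the map lands in the correct set.

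Next I would compute the cardinality of $\mathcal{T}_{2,n}$. Since it is an explicit product of integer intervals, we have
\begin{equation*}
\mid \mathcal{T}_{2,n} \mid = \prod_{i=1}^{n} \bigl( 2(n-i+1)-1 + 1 \bigr) = \prod_{i=1}^{n} 2(n-i+1) = 2^n \, n!.
\end{equation*}
By Corollary~\ref{longest} this equals $\mid B_n \mid$, so the domain and codomain have the same finite size. Consequently it suffices to prove that $I$ is either injective or surjective; either one will force the other.

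I would argue surjectivity constructively, which is the natural route given the explicit inversion-table-to-permutation reconstruction already described in the paragraphs preceding the proposition. Concretely, given any target tuple $(a_1 : \cdots : a_n) \in \mathcal{T}_{2,n}$, the iterative procedure (the P.P.V./P.I.V.\ steps used in the worked example) selects, at stage $i$, the entry in position $n-i+1$ by reading off the value at index $a_i$ in the decreasing list $n > \cdots > 1 > -1 > \cdots > -n$ with the already-used letters $\pm j$ deleted. At each stage the list has exactly $2(n-i+1)$ entries, so an index $a_i \in [0, 2(n-i+1)-1]$ always points to a valid remaining value; this guarantees the process never stalls and produces a genuine signed permutation $w \in B_n$. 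One then checks, using the formula~(\ref{33}), that this reconstructed $w$ indeed satisfies $inv_i(w) = a_i$ for all $i$, so that $I(w)$ equals the prescribed tuple. This shows $I$ is surjective, and combined with the equality of cardinalities it is a bijection.

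The main obstacle I expect is verifying that the reconstruction genuinely inverts the statistic, i.e.\ that the value read off at index $a_i$ produces exactly $inv_i(w) = a_i$ rather than merely some valid entry. The subtlety is that $inv_i(w)$ in Theorem~\ref{3} couples the sign bit $r_{n+1-i}$ with the relative order of $\beta_{n+1-i}$ against the earlier-positioned $\beta_j$, and the factor of $2$ attached to the positive-root contributions must be matched against the way the ordering $n > \cdots > 1 > -1 > \cdots > -n$ interleaves positive and negative letters. I would handle this by showing that the index of a value $(-1)^{r}m$ in the pruned list equals the right-hand side of~(\ref{33}): the positive entries larger than $m$ contribute the $inv_i(\beta)$ term, the sign flip contributes $r_{n+1-i}$, and the remaining count of $2\cdot\mid\{j : \beta_j < \beta_{n+1-i}\}\mid$ arises exactly when $r_{n+1-i}=1$ because each such $\beta_j$ appears on both the positive and negative side below position $i$ in the list. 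Making this index-counting identity precise is the crux; everything else is bookkeeping.
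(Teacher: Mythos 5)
Your argument is correct and follows the same route the paper itself relies on: the proposition is stated ``without proof'' on the strength of the preceding P.P.V./P.I.V.\ reconstruction, and your write-up supplies exactly the verification the paper omits --- the range check $inv_i(w)\in[0,2(n-i)+1]$ from Theorem~\ref{3}, the count $\mid\mathcal{T}_{2,n}\mid=\prod_{i=1}^{n}2(n-i+1)=2^{n}n!=\mid B_n\mid$, and the surjectivity of the reconstruction. The index-counting identity you flag as the crux does hold: writing $A=\mid\{j<n+1-i:\beta_j<\beta_{n+1-i}\}\mid$ and $B=inv_i(\beta)$, so that $A+B=n-i$, the position of $-\beta_{n+1-i}$ in the pruned list is $(n+1-i)+A=1+2A+B$, which is precisely the right-hand side of~(\ref{33}) when $r_{n+1-i}=1$, and the $r_{n+1-i}=0$ case is immediate.
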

Therefore, we conclude that the inversion table $I(w)$ is basically another way to represent a permutation $w\in B_n$. The fact that  $inv(w)=\sum_{i=1}^n inv_i (w)$ for any $w \in B_n$ allows us to give a new approach to the proof of Poincar\'e polynomial for $B_n$ in the sense of $S_n$ (see \cite{Stanley2011}).

\begin{thm} \label{stanley}
The Poincar\'e polynomial for $B_n$ is in the following form:
\begin{equation*}
\sum_{w \in B_n}q^{inv(w)}=\prod_{i=1}^n [2i]_q
\end{equation*}
where $q$ is an indeterminate and $[2i]_q=\frac{1-q^{2i}}{1-q}$ for every $i=1,\cdots, n$.
\end{thm}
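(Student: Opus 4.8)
The plan is to prove the generating function identity
\[
\sum_{w \in B_n} q^{inv(w)} = \prod_{i=1}^n [2i]_q
\]
by exploiting the bijection $I : B_n \to \mathcal{T}_{2,n}$ from the preceding Proposition together with the additivity $inv(w) = \sum_{i=1}^n inv_i(w)$ noted just before the theorem. The key structural observation is that the codomain $\mathcal{T}_{2,n}$ is a \emph{product} of intervals, $[0,2n-1] \times [0,2n-3] \times \cdots \times [0,3] \times [0,1]$, and that the statistic $inv$ pulled back along $I$ is simply the sum of the coordinate entries. This decouples the sum over $B_n$ into independent sums over each coordinate, which is exactly the situation in which a product formula arises.

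First I would rewrite the left-hand sum using the bijection. Since $I$ is a bijection and $inv(w) = \sum_{i=1}^n inv_i(w)$, summing $q^{inv(w)}$ over $w \in B_n$ is the same as summing $q^{a_1 + a_2 + \cdots + a_n}$ over all tuples $(a_1 : \cdots : a_n) \in \mathcal{T}_{2,n}$, where the $i$-th coordinate ranges over $[0, 2(n-i+1)-1]$. Formally,
\[
\sum_{w \in B_n} q^{inv(w)} = \sum_{(a_1:\cdots:a_n) \in \mathcal{T}_{2,n}} q^{a_1 + \cdots + a_n}.
\]
The second step is to factor this sum over the product structure of $\mathcal{T}_{2,n}$. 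Because the ranges of the coordinates are independent of one another, the sum of $q^{\sum a_i}$ over the Cartesian product splits as a product of one-variable sums:
\[
\sum_{(a_1:\cdots:a_n) \in \mathcal{T}_{2,n}} q^{a_1 + \cdots + a_n} = \prod_{i=1}^n \left( \sum_{a_i=0}^{2(n-i+1)-1} q^{a_i} \right).
\]

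The third step is to evaluate each factor. For fixed $i$, the inner geometric sum is $\sum_{a=0}^{2(n-i+1)-1} q^{a} = [2(n-i+1)]_q$, using the definition $[m]_q = \tfrac{1-q^m}{1-q} = 1 + q + \cdots + q^{m-1}$. As $i$ runs from $1$ to $n$, the index $n-i+1$ runs over $n, n-1, \ldots, 1$, so reindexing the product yields exactly $\prod_{i=1}^n [2i]_q$, which is the desired right-hand side.

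I do not expect any genuine obstacle here: the real content has already been discharged in the earlier results. The substantive work was establishing that $I$ is a bijection onto the product of intervals (the Proposition) and that $inv$ is the coordinate-sum under this correspondence (Theorem~\ref{3} together with the bound $inv_i(w) \in [0, 2(n-i)+1]$). Given those, the only point requiring a word of care is the legitimacy of splitting the multivariate sum into a product, which holds precisely because $\mathcal{T}_{2,n}$ is a full Cartesian product of intervals with no coupling between coordinates; I would state this distributivity explicitly rather than leave it implicit. A secondary cosmetic point is keeping the reindexing $i \mapsto n-i+1$ straight so that the factors line up with $\prod_{i=1}^n [2i]_q$ rather than its reverse, though since the product is commutative this is purely a matter of presentation.
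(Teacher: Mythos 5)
Your proposal is correct and follows essentially the same route as the paper's own proof: both use the bijection $I:B_n\rightarrow \mathcal{T}_{2,n}$ and the additivity $inv(w)=\sum_{i=1}^n inv_i(w)$ to rewrite the sum over $B_n$ as a sum over the Cartesian product of intervals, which then factors into a product of geometric series equal to $\prod_{i=1}^n [2i]_q$. Your version is slightly more careful in that it states the distributivity step and the reindexing explicitly, but the argument is the same.
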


\begin{proof}
If $I(w)=(inv_1(w):\cdots: inv_n(w))=(a_1:\cdots: a_n)$ then $inv(w)=\sum_{i=1}^na_i.$ Hence 
\begin{align*}
\sum_{w \in B_n}q^{\textit{L}(w)}=\sum_{w \in B_n}q^{inv(w)}=&\sum_{a_1=0}^{2n-1}~~\sum_{a_2=0}^{(2n-3)}\cdots \sum_{a_n=0}^{1}q^{a_1+a_2+\cdots+a_n}\\
=&(\sum_{a_1=0}^{2n-1}q^{a_1}) (\sum_{a_2=0}^{2n-3}q^{a_2}) \cdots (\sum_{a_n=0}^{1}q^{a_n})\\
=&\prod_{i=1}^n [2i]_q
\end{align*}
as desired.
\end{proof}

Let $\pi=[\pi_1,\cdots,\pi_{n-1}] \in B_{n-1}$. We want to observe how the insertion of $n$ (resp. $-n$) into the permutation $\pi$ affects the inversion statistic. There are clearly $n$ places where we can put $n$ (resp. $-n$) into the permutation $[\pi_1,\cdots, \pi_{n-1}]$. More precisely, for each $i=1,\cdots,n-1$ there is one place immediately after $\pi_i$ which is called space $i$ and there is one more place immediately before $\pi_1$ which we call space $0$. It is easy to see that the following insertion lemma holds. We denote by $\pi_{n,i}$ (resp. $\pi_{-n,i}$) the permutation in $B_n$ by inserting $n$ (resp. $-n$) into place $i$ in $\pi$.

\begin{lem} \label{insertion}
Suppose that $\pi=[\pi_1,\cdots,\pi_{n-1}]$ is a permutation in $B_{n-1}$. Then we have
\begin{enumerate}
    \item $inv\pi_{n,i}=n-i-1+inv\pi$
    \item $inv\pi_{-n,i}=n+i+inv\pi.$
\end{enumerate}
\end{lem}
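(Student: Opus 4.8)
The plan is to express the total statistic $inv(w)=L(w)=\sum_{p=1}^{n} inv_{n+1-p}(w)$ as a sum of per-position contributions extracted from Theorem \ref{3}, and then to track precisely how each contribution is altered by the insertion. Writing $c_p(w):=inv_{n+1-p}(w)$ for the summand indexed by position $p$, Theorem \ref{3} gives, with $\beta_p=|w_p|$,
\begin{equation*}
c_p(w)=|\{j<p:|w_j|>|w_p|\}| \qquad (w_p>0),
\end{equation*}
while in the case $w_p<0$, using $|\{j<p:|w_j|<|w_p|\}|+|\{j<p:|w_j|>|w_p|\}|=p-1$, the expression $1+2|\{j<p:|w_j|<|w_p|\}|+|\{j<p:|w_j|>|w_p|\}|$ of Theorem \ref{3} collapses to
\begin{equation*}
c_p(w)=p+|\{j<p:|w_j|<|w_p|\}| \qquad (w_p<0).
\end{equation*}
Since $inv(w)=\sum_{p=1}^n c_p(w)$, it suffices to compute the change in each $c_p$ under insertion.

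Next I would note that inserting $n$ (resp. $-n$) at space $i$ places an entry of maximal absolute value $n$ at position $i+1$ and leaves the relative order of the original entries $\pi_1,\dots,\pi_{n-1}$ unchanged. I split the positions into three groups. The entries sitting strictly before the new one (original positions $1,\dots,i$) keep both their position index and the set of entries preceding them, so their contributions are unchanged. For the inserted entry itself at position $i+1$: if it is $n>0$ then nothing before it has larger absolute value, giving contribution $0$; if it is $-n<0$ then every one of the $i$ preceding entries has smaller absolute value, giving contribution $(i+1)+i=2i+1$. Finally, for each of the $n-1-i$ original entries lying after the new one, its position index shifts up by one and the entry $\pm n$ now precedes it; because $n$ has the largest absolute value, one checks in both the positive-entry case (where the count of preceding larger absolute values increases by one) and the negative-entry case (where the position shift contributes one while the count of preceding smaller absolute values is unaffected) that $c_p$ increases by exactly $1$.

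Summing these contributions then yields the two identities: for the insertion of $n$ the total change is $0+(n-1-i)\cdot 1=n-i-1$, and for the insertion of $-n$ it is $(2i+1)+(n-1-i)\cdot 1=n+i$, which are precisely items (1) and (2). The only delicate point, and the step I would carry out most carefully, is the uniform ``$+1$'' behaviour of the trailing entries: one must verify that the position shift and the domination by the maximal absolute value $n$ conspire to give the same increment whether the trailing entry is positive or negative, so that the two cases of Theorem \ref{3} can be treated simultaneously. A pair of boundary checks ($i=0$ and $i=n-1$) then confirms the endpoint behaviour and completes the verification.
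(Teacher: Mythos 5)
Your proof is correct and complete; the paper itself states this lemma without proof (``it is easy to see''), so your argument actually supplies what the paper omits. The key simplification — rewriting the negative-entry contribution $1+2|\{j<p:|w_j|<|w_p|\}|+|\{j<p:|w_j|>|w_p|\}|$ as $p+|\{j<p:|w_j|<|w_p|\}|$ via $A+B=p-1$ — is exactly what makes the three-way case analysis (unchanged prefix, inserted entry contributing $0$ or $2i+1$, trailing entries each gaining $1$) go through uniformly, and the totals $n-i-1$ and $(2i+1)+(n-1-i)=n+i$ match the claim. The computation also agrees with the paper's worked example $\pi=[-3,1,2,-4,-5]$, where $inv\pi_{6,2}=19+3=22$ and $inv\pi_{-6,2}=19+8=27$.
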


\begin{ex}
We consider $\pi=[-3,1,2,-4,-5] \in B_5$. Then the inversion table of $\pi$ is $I(\pi)=(9:7:1:1:1)$and $inv\pi=19$. If $\pi_{6,2}=[-3,1,6,2,-4,-5]$, then $I(\pi_{6,2})=(10:8:2:0:1:1)$ and $inv\pi_{6,2}=22$. If $\pi_{-6,2}=[-3,1,-6,2,-4,-5]$, then $I(\pi_{-6,2})=(10:8:2:5:1:1)$ and $inv\pi_{-6,2}=27$.
\end{ex}
We immediately give the next corollary as a result of Lemma \ref{insertion}.

\begin{cor}
Let $\pi=[\pi_1,\cdots,\pi_{n-1}] \in B_{n-1}$. Then we have
\begin{enumerate}\label{suminsertions1}
       \item $\sum_{i=0}^{n-1}q^{inv\pi_{n,i}}=[n]_q q^{inv\pi}$,
       \item $\sum_{i=0}^{n-1}q^{inv\pi_{-n,i}}=q^n [n]_q q^{inv\pi}$.
\end{enumerate} 
\end{cor}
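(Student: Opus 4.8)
The plan is to prove the corollary directly from Lemma~\ref{insertion}, since the two statements are just the generating-function consequences of the per-place inversion increments recorded there. Summing $q^{inv\pi_{n,i}}$ over all $n$ admissible places should factor as $q^{inv\pi}$ times a geometric sum, so the whole task reduces to identifying that sum. First I would fix $\pi \in B_{n-1}$ and, for part (1), substitute the formula $inv\pi_{n,i}=n-i-1+inv\pi$ from Lemma~\ref{insertion} into the sum. Pulling the constant factor $q^{inv\pi}$ out of the sum over $i$ leaves $\sum_{i=0}^{n-1}q^{\,n-i-1}$.

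Next I would evaluate that geometric sum. As $i$ runs over $0,1,\dots,n-1$, the exponent $n-i-1$ runs over $n-1,n-2,\dots,0$, so the sum is simply $\sum_{j=0}^{n-1}q^{j}=[n]_q$. This gives exactly $\sum_{i=0}^{n-1}q^{inv\pi_{n,i}}=[n]_q\,q^{inv\pi}$, which is part (1). For part (2) I would proceed identically, substituting $inv\pi_{-n,i}=n+i+inv\pi$ and factoring out $q^{inv\pi}$ to leave $\sum_{i=0}^{n-1}q^{\,n+i}=q^{n}\sum_{i=0}^{n-1}q^{i}=q^{n}[n]_q$, yielding $\sum_{i=0}^{n-1}q^{inv\pi_{-n,i}}=q^{n}[n]_q\,q^{inv\pi}$.

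There is essentially no hard part here: both identities are immediate from the linear-in-$i$ form of the inversion increments in Lemma~\ref{insertion}, and the only computation is recognizing two geometric sums as $[n]_q$. The one point to be careful about is the direction in which the exponent runs in each case, so that I correctly read off $[n]_q$ (for the positive insertion, the exponents decrease from $n-1$ to $0$; for the negative insertion, they increase from $n$ to $2n-1$, giving the extra factor $q^{n}$). Once those index ranges are verified, the two displayed equalities follow directly and the corollary is complete.
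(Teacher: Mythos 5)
Your proof is correct and matches the paper's intent exactly: the paper states this corollary without proof as an immediate consequence of Lemma \ref{insertion}, and your substitution of the increments $n-i-1$ and $n+i$ followed by recognizing the two geometric sums as $[n]_q$ and $q^n[n]_q$ is precisely the computation being left implicit. Nothing is missing.
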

Hence for any $\pi \in B_{n-1}$, we conclude that
\begin{equation}\label{suminsertions2}
 \sum_{i=0}^{n-1}(q^{inv\pi_{n,i}}+q^{inv\pi_{-n,i}})=([n]_q +q^n [n]_q) q^{inv\pi}=[2n]_q q^{inv\pi}.  
\end{equation}
Hence, it is not hard to prove by induction that
\begin{equation}
\sum_{\pi \in B_n}q^{inv\pi}=[2n]_q \sum_{\tau \in B_{n-1}}q^{inv\tau}=\prod_{i=1}^n [2i]_q.
\end{equation}

For any $w\in B_n$ with the flag major index $fmaj(w)=k_0+k_1+\cdots+k_{n-1}$, where $0\leq k_i \leq 2i+1$,~$i=0,1,\cdots,n-1$. Then we have
\begin{align*}
\sum_{w \in B_n}q^{fmaj(w)}=&\sum_{k_{n-1}=0}^{2n-1}~~\sum_{k_{n-2}=0}^{2n-3}\cdots \sum_{k_0=0}^{1}q^{k_0+k_1+\cdots+k_{n-1}}\\
=&(\sum_{k_{n-1}=0}^{2n-1}q^{k_{n-1}}) (\sum_{k_{n-2}=0}^{2n-3}q^{k_{n-2}}) \cdots (\sum_{k_0=0}^{1}q^{k_0})\\
=&\prod_{i=1}^n [2i]_q.
\end{align*}

Considering Theorem \ref{stanley} and the above result together, we conclude that the inversion statistic is equi-distrubuted with and the flag-major index over $B_n$. Moreover, we can define a map $\phi: B_n \rightarrow B_n$  such that $inv(w)=fmaj(\phi(w))$ for each $w \in B_n$ as follows: Let the inversion table of $w$ be $I(w)=(a_{n-1}:\cdots:a_1:a_0)$. If we define
$$\phi(w)=\sigma_{n-1}^{a_{n-1}}\cdots \sigma_{1}^{a_{1}}\sigma_{0}^{a_{0}}$$
then it is obvious that $\phi$ is a bijection and $inv(w)=\sum_{i=0}^{n-1} a_i=fmaj(\phi(w)),$  where $\sigma_{i}$ is defined as in (\ref{flag0}) for each $i,~0 \leq i \leq n-1$.

\begin{ex}

In Table \ref{tab:table1}, we respectively record the ranks and the inversion tables of the forty-eight elements of $B_3$ and their images under the map $\phi$. Note that $inv(w)=fmaj(\phi(w))$ holds for each $w \in B_3$. In the following table, we will denote any permutation $w$ in $B_3$ by $w_1w_2w_3$.
\begin{table}[h!]
\centering	
\quad \caption{Inversion table of the group $B_3$.}
	\label{tab:table1}

 \scalebox{0.8}{
 
	\begin{tabular}{|c|c|c|c||c|c|c|c||c|c|c|c|}
		\hline
		Rank&\textbf{$w_1 w_2 w_3$} & \textbf{$I(w)$} & \textbf{$\phi(w)$}&Rank&\textbf{$w_1 w_2 w_3$}  & \textbf{$I(w)$} &\textbf{$\phi(w)$}& Rank&\textbf{$w_1 w_2 w_3$} & \textbf{$I(w)$}&\textbf{$\phi(w)$}\\
		\hline\hline
		1&1 2 3 & 0:0:0 &1 2 3&17&2 3 1 & 2:0:0 &-2 -3 1&33 &1 3 -2 & 4:0:0&3 -1 -2\\
		\hline
		2&-1 2 3 & 0:0:1 &-1 2 3&18& -2 3 1 & 2:0:1 &2 -3 1&34& -1 3 -2 & 4:0:1&-3 -1 -2\\
		\hline
		3&2 1 3 & 0:1:0 &-2 1 3&19& 3 2 1 & 2:1:0 &3 -2 1&35& 3 1 -2 & 4:1:0&1 3 -2\\
		\hline
		4&-2 1 3 & 0:1:1 &2 1 3 &20& -3 2 1 & 2:1:1 &-3 -2 1&36 &-3 1 -2 & 4:1:1&-1 3 -2\\
		\hline
		5&2 -1 3 & 0:2:0 &-1 -2 3&21& 3 -2 1 & 2:2:0 &2 3 1&37& 3 -1 -2 & 4:2:0&-3 1 -2\\
		\hline
		6&-2 -1 3 & 0:2:1 &1 -2 3&22&-3 -2 1 & 2:2:1 &-2 3 1&38 &-3 -1 -2 & 4:2:1&3 1 -2\\
		\hline
		7&1 -2 3 & 0:3:0 &2 -1 3&23& 2 -3 1 & 2:3:0 &-3 2 1&39& 1 -3 -2 & 4:3:0&-1 -3 -2\\
		\hline
		8&-1 -2 3 & 0:3:1 &-2 -1 3 &24 &-2 -3 1 & 2:3:1 &3 2 1&40& -1 -3 -2 & 4:3:1&1 -3 -2\\
		\hline
		9&1 3 2 & 1:0:0 &-3 1 2&25 &2 3 -1 & 3:0:0 &-1 -2 -3&41& 1 2 -3 & 5:0:0&2 3 -1\\
		\hline
		10&-1 3 2 & 1:0:1 &3 1 2&26& -2 3 -1 & 3:0:1 &1 -2 -3&42 &-1 2 -3 & 5:0:1&-2 3 -1\\
		\hline
		11&3 1 2 & 1:1:0 &-1 -3 2&27& 3 2 -1 & 3:1:0 &2 -1 -3&43& 2 1 -3 & 5:1:0&-3 2 -1\\
		\hline
		12&-3 1 2 & 1:1:1 &1 -3 2&28& -3 2 -1 & 3:1:1 &-2 -1 -3&44& -2 1 -3 & 5:1:1&3 2 -1\\
		\hline
		13&3 -1 2 & 1:2:0 &3 -1 2&29& 3 -2 -1 & 3:2:0 &1 2 -3&45& 2 -1 -3 & 5:2:0&-2 -3 -1\\
		\hline
            14&-3 -1 2 & 1:2:1 &-3 -1 2&30& -3 -2 -1 & 3:2:1 &-1 2 -3&46& -2 -1 -3 & 5:2:1&2 -3 -1\\
		\hline
		15&1 -3 2 & 1:3:0 &1 3 2&31 &2 -3 -1 & 3:3:0 &-2 1 -3&47& 1 -2 -3 & 5:3:0&3 -2 -1\\
		\hline
		16&-1 -3 2 & 1:3:1 &-1 3 2&32& -2 -3 -1 & 3:3:1 &2 1 -3&48 &-1 -2 -3 & 5:3:1&-3 -2 -1\\
		\hline
	\end{tabular}}
\end{table}
\end{ex}

\textbf{Question:}
It is a natural question to ask here when considering both the inversion number and the flag-major index together, then how exactly can Haglund-Remmel-Wilson identity be defined for the group $B_n$? As a matter of fact, the flag major part of Haglund-Remmel-Wilson identity for the group $B_n$ was given in \cite{ding2023identities} by depending on $q$-Stirling numbers of the second kind in type B. In the case of symmetric group, the proof of Haglund-Remmel-Wilson identity, 
$$\sum_{\sigma \in S_n}q^{inv(\sigma)}\prod_{j \in Des(\sigma)}(1+\frac{z}{q^{1+inv_j(\sigma)}})=\sum_{\sigma \in S_n}q^{maj(\sigma)}\prod_{j=1}^{des(\sigma)} (1+\frac{z}{q^j})$$
where $inv_j(\sigma)=| \{(i,j) \in [1,n] \times [1,n] ~:~ i<j~and~\sigma_i > \sigma _{j}\} |$ is defined just as Theorem \ref{3}, was proved by Remmel and Wilson in \cite{remmel2015}.

\end{document}